\newtheorem{theorem}{Theorem}[section]
\newtheorem{lemma}[theorem]{Lemma}
\newtheorem{proposition}[theorem]{Proposition}
\newtheorem{corollary}[theorem]{Corollary}
\theoremstyle{remark}
\newtheorem{remark}[theorem]{Remark}
\def\gp#1{\langle \hspace*{.2mm} #1 \hspace*{.25mm} \rangle}
\newcommand{\Rad}{\mathrm{Rad}}
\newcommand{\F}{\mathbb{F}}
\newcommand{\Z}{\mathbb{Z}}
\newcommand{\Q}{\mathbb{Q}}
\newcommand{\GL}{\mathrm{GL}}
\begin{document}

%\vspace*{-30mm}

%\noindent \mbox{\footnotesize To appear in \emph{J. Algebra}}

%\vspace*{20mm}

\title{Algorithms for the Tits alternative and related problems}

\author{A. S. Detinko}
\address{School of Mathematics, Statistics and Applied Mathematics,
National University of Ireland, Galway, Ireland}
\email{alla.detinko@nuigalway.ie}

\author{D. L. Flannery}
\address{School of Mathematics, Statistics and Applied Mathematics,
National University of Ireland, Galway, Ireland}
\email{dane.flannery@nuigalway.ie}

\author{E. A. O'Brien}
\address{Department of Mathematics, University
of Auckland, Private Bag 92019, Auckland, New Zealand}
\email{e.obrien@auckland.ac.nz}

%\date{\today}

%\footnote{Detinko and Flannery were supported by Science
%Foundation Ireland grants 07/MI/007 and 08/RFP/MTH1331.  O'Brien
%was supported by the Marsden Fund of New Zealand grant UOA 1015.
%He thanks the Department of Mathematics, National Cheng Kung
%University, Taiwan, for its hospitality while this work was
%completed. Most importantly, we are very much indebted to
%Professor B.~A.~F. Wehrfritz, who kindly provided us with his new
%results \cite{Wehrf} on congruence subgroups of solvable-by-finite
%linear groups.}

\begin{abstract}
We present an algorithm that decides whether a finitely generated
linear group over an infinite field is solvable-by-finite:
a computationally effective version of the Tits
alternative. We also give algorithms to decide whether the group
is nilpotent-by-finite, abelian-by-finite, or central-by-finite.
Our algorithms have been implemented in {\sc Magma} and are publicly available.
\end{abstract}

\maketitle

\section{Introduction}

The {\em Tits alternative}, established by Tits \cite{Tits},
states that a finitely generated linear group over a field either
is solvable-by-finite, or it contains a non-cyclic free subgroup.
This theorem partitions finitely generated linear groups into two
very different classes, which require separate treatment.
Consequently, one of the first questions that must be settled for
such a group is to determine the class of the Tits alternative to
which it belongs. In the class of groups with non-cyclic free
subgroups, some basic computational problems are undecidable in
general; whereas solvable-by-finite groups are more amenable to
computation (see \cite[Section 3]{survey}). For further discussion
of the Tits alternative, and its influence on other areas of group
theory, we refer to \cite{DixonTits}.

Algorithms to decide the Tits alternative over the rational field
$\Q$ were proposed in \cite{Beals1,Beals2}. Drawing on results of
\cite{Dixon85}, a different approach was considered in
\cite{Ostheimer}. Another algorithm for the Tits alternative in
$\GL(n,\Q)$, as well as practical algorithms to test solvability
and polycyclicity of rational matrix groups, appeared in
\cite{BettinaBjorn,BettinaBjorn2,Polenta}. We are not aware of
implementations of these algorithms to decide the Tits alternative
over $\Q$.

This paper gives a practical algorithm to decide whether a
finitely generated linear group over an arbitrary field is
solvable-by-finite. Additionally, we can test whether the group is
solvable. Our method uses congruence homomorphism techniques (see
\cite[Section 4]{survey}), which were applied previously to
special cases of the problems mentioned above; namely, deciding
finiteness and nilpotency
\cite{Draft,JSC4534,PositiveFiniteness,Finiteness}. We also rely
on two other recent developments. The first is a description by
Wehrfritz \cite{Wehrf} of congruence subgroups of
solvable-by-finite linear groups. The second is the development of
effective algorithms to construct presentations of matrix groups
over finite fields (see \cite{CT,OBriensurvey1}).

If the field is $\Q$, our algorithm to test virtual solvability is
a refinement and extension of that in \cite{BettinaBjorn}.
However, we consider finitely generated linear groups defined over
an arbitrary field (albeit possibly with a finite number of
exceptions in positive characteristic). We also solve the related
problems of deciding whether a group defined over a field of
characteristic zero is virtually nilpotent, virtually abelian, or
central-by-finite. The resulting algorithms are practical, and
implementations are publicly available in {\sc Magma}
\cite{Magma}.

We emphasize that this paper demonstrates that the various
problems of testing virtual properties are {\em decidable} for
finitely generated groups over a wide range of fields. Solvability
testing was previously known to be decidable for groups over
number fields \cite{Kopytov}.

Section~\ref{Background} sets up the background theory for our
congruence homomorphism techniques. In Section~\ref{SFsection} we
present an algorithm to decide virtual solvability. Section
\ref{CompletelyReducible} deals with the special case where the
group is completely reducible. In Section~\ref{NFAFCFsection} we
outline algorithms to decide whether a group in characteristic
zero is nilpotent-by-finite, abelian-by-finite, or
central-by-finite. Finally, we report on the {\sc Magma}
implementation of our algorithms.

\section{Congruence homomorphisms and computing in solvable-by-finite groups}
\label{Background}

We start by fixing some notation. Let $G = \gp{S} \leq
\mathrm{GL}(n, \F)$, where $S = \{g_1, \ldots, g_r\}$ and $\F$ is
an infinite field. Denote the integral domain generated by the
entries of the matrices in $S \cup S^{-1}$ by $R$. Recall that
$R/\rho$ is a finite field if $\rho$ is a maximal ideal of $R$
\cite[4.1, \mbox{p.} 50]{Wehrfritz}. Let $\rho$ be a (proper)
ideal of a subring $\Delta$ of $\F$; then natural projection
$\Delta \rightarrow \Delta/\rho$ extends to a group homomorphism
$\mathrm{GL}(n, \Delta) \rightarrow \mathrm{GL}(n, \Delta/\rho)$
and a ring homomorphism $\mathrm{Mat}(n, \Delta) \rightarrow$
$\mathrm{Mat}(n, \Delta/\rho)$. We denote all these homomorphisms
by $\psi_\rho$. The kernel of $\psi_\rho$ on $G$ is denoted
$G_\rho$, and is called a {\em congruence subgroup} of $G$.

\subsection{Congruence subgroups of solvable-by-finite groups}

Each solvable-by-finite linear group has a triangularizable normal
subgroup of finite index \cite[Theorem 7, \mbox{p.} 135]{Supr}; in
particular, its Zariski connected component is
unipotent-by-abelian. Proving that $G$ is solvable-by-finite is
therefore equivalent to proving that $G$ has a
unipotent-by-abelian normal subgroup of finite index. So to apply
congruence homomorphism techniques to computing in the first class
of the Tits alternative, we should first answer the following
question: if $G$ is solvable-by-finite, for which ideals $\rho
\subseteq R$ is $G_\rho$ unipotent-by-abelian? We summarize recent
results of Wehrfritz \cite[Theorems 1--3]{Wehrf} that describe
such ideals (as usual, $H'$ is the commutator subgroup $[H,H]$ of
a group $H$).
\begin{theorem}\label{wstuff}
Suppose that $G \leq \mathrm{GL}(n, \Delta)$ is
solvable-by-finite, where $\Delta$ is an integral domain.
\begin{itemize}
\item[{\rm (i)}] Let $\rho$ be an ideal of $\Delta$. If
$\mathrm{char} \, \Delta = p > n$, or $\mathrm{char} \, \Delta =
0$ and $\mathrm{char}( \Delta/\rho )=$ $p > n$, then $G'_{\rho}$
is unipotent.

\item[{\rm (ii)}] Suppose that $\Delta$ is a Dedekind domain of
characteristic zero, and $\rho$ is a maximal ideal of $\Delta$. If
$p\in \Z$ is an odd prime such that $p \in \rho \setminus
\rho^{p-1}$, then $G_\rho$ is connected; hence $G'_{\rho}$ is
unipotent.
\end{itemize}
\end{theorem}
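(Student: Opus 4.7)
The plan is to pass to the Zariski closure $\bar{G}$ of $G$ in $\GL(n, \bar{\F})$, where $\bar{\F}$ is an algebraic closure of the fraction field of $\Delta$. Since $G$ is solvable-by-finite, so is $\bar{G}$, and by Lie--Kolchin the identity component $\bar{G}^0$ is triangularizable; in particular, commutators of any two elements of $\bar{G}^0$ are unipotent. Both parts then reduce to controlling the image of $G_\rho$ in the finite quotient $\bar{G}/\bar{G}^0$: for (ii) one wants $G_\rho \subseteq \bar{G}^0$ outright, and for (i) it is enough that $[G_\rho, G_\rho]$ lies in $\bar{G}^0$, after which a short additional argument inside the triangularizable group $\bar{G}^0$ yields unipotency of $G'_\rho$.

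The key technical lemma I would isolate concerns reduction of roots of unity modulo primes above $\rho$. Every torsion element $t$ of $\bar{G}/\bar{G}^0$ lifts to a matrix whose semisimple part has eigenvalues in $\bar{\F}$ that are roots of unity, and if $t$ comes from the image of $g \in G_\rho$ then these eigenvalues reduce to $1$ modulo some prime $\bar{\rho}$ above $\rho$ in the integral closure of $\Delta$ in $\bar{\F}$. If such an eigenvalue $\zeta$ has order $m$ coprime to $\mathrm{char}(\Delta/\rho) = p$, then $x^m - 1$ is separable modulo $\bar{\rho}$, so the $m$-th roots of unity embed injectively under reduction and therefore $\zeta = 1$. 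This reduces the problem in both parts to ruling out nontrivial $p$-power-order roots of unity among eigenvalues of elements of $G_\rho$.

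For part (i), the hypothesis $p > n$ handles the $p$-power case: in characteristic $p$ the only $p^a$-th root of unity is $1$ (since $x^{p^a} - 1 = (x-1)^{p^a}$), so when $\mathrm{char}\,\Delta = p$ we are done, and in the mixed-characteristic case a $p^a$-th root of unity as eigenvalue of a matrix of size $n < p$ is forced to be $1$ by a block-size argument after reduction into $\GL(n, \Delta/\rho)$. For part (ii), I would pass to the $\rho$-adic completion and invoke the standard local formula $v(\zeta - 1) = v(p)/(p^{a-1}(p-1))$ for a primitive $p^a$-th root of unity $\zeta \neq 1$, where $v$ is the normalised extended valuation; the hypothesis $p \in \rho \setminus \rho^{p-1}$ gives $v(p) < p - 1$, whence $v(\zeta - 1) < 1$ and so $\zeta \not\equiv 1 \pmod{\bar{\rho}}$. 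Therefore no nontrivial torsion element of $\bar{G}/\bar{G}^0$ is hit by $G_\rho$, giving $G_\rho \subseteq \bar{G}^0$. The step I expect to be hardest is the mixed-characteristic case of (i): coordinating the valuation extension to $\bar{\F}$, the Jordan decomposition of lifts of torsion elements of $\bar{G}/\bar{G}^0$, and the bound $p > n$ (which looks more like a representation-theoretic obstruction than a valuation-theoretic one) seems to require care, and it is plausible that the cleanest route uses structural facts about finite subgroups of $\GL(n, k)$ for finite fields $k$ of characteristic $p > n$.
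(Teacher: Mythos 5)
The paper itself offers no proof of this theorem: it is stated as a summary of Wehrfritz's results and referenced to \cite[Theorems 1--3]{Wehrf}. So the only question is whether your blind reconstruction is correct, and I think it has genuine gaps.

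Your ``key technical lemma'' conflates two different matrices. You want to say that a torsion element $t$ of $\bar G/\bar G^0$ has \emph{some} lift $h$ whose semisimple part has roots of unity as eigenvalues, and then conclude that if $t$ is the image of $g\in G_\rho$, ``these eigenvalues'' reduce to $1$ modulo $\bar\rho$. But the lift $h$ with roots-of-unity eigenvalues is not $g$; it need not lie in $\GL(n,\Delta)$ at all, so ``reduction modulo $\bar\rho$'' is not even defined for it, and nothing in the setup forces $h$ and $g$ to have comparable eigenvalues. Conversely, the element $g\in G_\rho$ does have eigenvalues reducing to $1$, but those eigenvalues are \emph{not} roots of unity in general (consider $g=\left(\begin{smallmatrix}0&2\\1&0\end{smallmatrix}\right)$, which maps to the nontrivial class of its Zariski closure modulo the identity component, yet has eigenvalues $\pm\sqrt2$). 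To close this you would need something like: the ratios $\lambda_i/\lambda_j$ of eigenvalues of $g$ include a nontrivial root of unity whenever $g\notin\bar G^0$ --- and that requires a structural argument (passing to the reductive quotient $\bar G/U$, whose identity component is a torus, and analysing the conjugation action on it) that is absent from your sketch. The valuation computation for (ii) is fine once one has a root of unity in hand, but the reduction to that situation is the real content.

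For part (i) the logic is also incomplete even granting the lemma. Showing $[G_\rho,G_\rho]\subseteq\bar G^0$ puts $G'_\rho$ inside a triangularizable group, but triangularizable is not unipotent: elements of $G'_\rho$ could still have nontrivial semisimple parts. Your ``short additional argument inside the triangularizable group'' is exactly the missing step and is not obviously short; one again needs to kill the semisimple parts, which circles back to the same eigenvalue control that is not established. Your final remark --- that the mixed-characteristic case of (i) probably needs structural facts about finite subgroups of $\GL(n,k)$ for $k$ finite of characteristic $p>n$ --- is a reasonable instinct, but as written the proposal does not supply those facts, and the ``block-size argument after reduction'' is not spelled out. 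In short: the overall framework (Zariski closure, Lie--Kolchin, reduction of roots of unity, $p$-adic valuation for (ii)) is the right circle of ideas, in the spirit of Dixon's Lemma 9 which the paper does invoke elsewhere, but the crucial lemma linking elements of $G_\rho$ to roots of unity is unproved and as stated is false.
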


We call $\psi_\rho: \GL(n,\Delta)\rightarrow
\GL(n,\Delta/\rho)$ a {\em W-homomorphism} if $\Delta/\rho$ is
finite and $G_\rho'$ is unipotent whenever $G\leq \GL(n,\Delta)$
is solvable-by-finite.

\subsection{Construction of W-homomorphisms}
\label{ConstructionofWhomomorphismsSection}

We may assume that $\F$ is finitely generated over its prime
subfield, and is the field of fractions of $R$. Then it suffices
to let $\F$ be one of
\begin{itemize}
\item[I.] the rationals $\Q$, \item[II.] a number field,
\item[III.] a function field $\mathbb{P}(x_1, \ldots, x_m)$, or
\item[IV.] a finite extension of $\mathbb{P}(x_1, \ldots, x_m)$,
\end{itemize}
where $\mathbb{P}$ is a number field or finite field in III--IV.
See \cite[Section 4]{survey} for more details.

In each case I--IV we explain below how to construct
W-homomorphisms on $\GL(n,R)$. Note that if $\mathbb F$ has
positive characteristic at most $n$, then in general we cannot
construct a W-homomorphism. For a subring $\Delta$ of a field,
$\frac{1}{\mu}\Delta$ denotes the localization $\{ x\mu^{-i} \mid
x \in \Delta, i \geq 0\}$ of $\Delta$ at a non-zero element $\mu$.

\subsubsection{The rational field} \label{ratfield}
(\mbox{Cf.} \cite[Lemma 9]{Dixon85}.) Let $\F = \Q$. Then $R =
\frac{1}{\mu}\Z$ for some $\mu \in \Z\setminus \{ 0\}$ determined
by the denominators of entries in the elements of $S\cup S^{-1}$.
By Theorem~\ref{wstuff} (ii), if $p\in \Z$ is an odd prime not
dividing $\mu$, then reduction mod $p$ is a W-homomorphism from
$\GL(n,R)$ onto $\GL(n,p)$. We denote this homomorphism by
$\Psi_1=\Psi_{1,p}$.

\subsubsection{Number fields} Let $\F = \Q(\alpha)$
where $\alpha$ is an algebraic integer. We may take $R =
\frac{1}{\mu}\mathbb{Z[\alpha]}$, $\mu \in \Z \setminus \{ 0\}$.
Let $f(t) = a_0 + \cdots + a_{k-1}t^{k-1} + t^k\in \Z[t]$ be the
minimal polynomial of $\alpha$. For a prime $p \in \Z$ not
dividing $\mu$, define $\psi_{2,p}:R\rightarrow
\Z_p(\bar{\alpha})$ by
\[
\psi_{2,p}: \textstyle{\sum^{k-1}_{i = 0}}b_i\alpha^i \mapsto
\textstyle{\sum^{k-1}_{i = 0}}\bar{b}_i\bar{\alpha}^i
\]
where $\bar{b}_i$ denotes the reduction of $b_i$ mod $p$, and
$\bar{\alpha}$ is a root of $\bar{f}(t) = \bar{a}_0 + \cdots +
\bar{a}_{k-1}t^{k-1} + t^k$.

\begin{lemma}\label{numberfieldswhom}
\begin{itemize}
\item[{\rm (i)}] Let $p\in \Z$ be an odd prime dividing neither
$\mu$ nor the discriminant of $f(t)$. Then $\psi_{2,p}$ is a
W-homomorphism. \item[{\rm (ii)}] Let $p\in \Z$ be a prime greater
than $n$ not dividing $\mu$. Then $\psi_{2,p}$ is a
W-homomorphism.
\end{itemize}
\end{lemma}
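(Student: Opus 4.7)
The strategy is to reduce each part to Theorem \ref{wstuff}. In both cases the image of $\psi_{2,p}$ is contained in the finite field $\mathbb{F}_p(\bar{\alpha})$, so $R/\rho$ is finite, where $\rho := \ker \psi_{2,p}$. It remains to establish the unipotency of $G'_\rho$ when $G \leq \GL(n,R)$ is solvable-by-finite.

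Part (ii) will follow directly from Theorem \ref{wstuff}(i): $R$ has characteristic $0$ and $R/\rho$ has characteristic $p > n$, so $G'_\rho$ is unipotent, and $\psi_{2,p}$ is a W-homomorphism.

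For part (i) the plan is to invoke Theorem \ref{wstuff}(ii), which requires a Dedekind domain. Since $\mathbb{Z}[\alpha]$ need not coincide with the ring of integers $\mathcal{O}$ of $\F = \Q(\alpha)$, the ring $R$ may fail to be Dedekind. However, the localization $S := \frac{1}{\mu}\mathcal{O}$ is Dedekind and contains $R$, and $G \leq \GL(n,R) \leq \GL(n,S)$, so one can work inside $\GL(n,S)$. The next step is to extend $\psi_{2,p}$ from $R$ to $S$. From the identity $\mathrm{disc}(f) = [\mathcal{O}:\mathbb{Z}[\alpha]]^2 \cdot \mathrm{disc}(\mathcal{O})$, the hypothesis $p \nmid \mathrm{disc}(f)$ forces both $p \nmid [\mathcal{O}:\mathbb{Z}[\alpha]]$ and the unramifiedness of $p$ in $\mathcal{O}$; consequently $\mathbb{Z}[\alpha] \hookrightarrow \mathcal{O}$ induces an isomorphism after reducing modulo $p$, and $\psi_{2,p}$ extends to a surjection $\psi_P : S \to \mathcal{O}/P$, where $P$ is the prime of $\mathcal{O}$ above $p$ corresponding to the irreducible factor of $\bar{f}$ with root $\bar{\alpha}$. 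The kernel $\rho_S := \frac{1}{\mu}P$ is a maximal ideal of $S$, and $G_{\rho_S} = G_\rho$ since the two homomorphisms agree on $R$.

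It then remains to verify the arithmetic condition $p \in \rho_S \setminus \rho_S^{p-1}$. Because $p$ is unramified in $\mathcal{O}$, the prime $p$ lies in $P \setminus P^2$; since $p$ is odd we have $p - 1 \geq 2$, so $p \notin P^{p-1}$, hence $p \notin \rho_S^{p-1}$. Applying Theorem \ref{wstuff}(ii) to $G \leq \GL(n,S)$ yields that $G'_\rho = G'_{\rho_S}$ is unipotent. The main obstacle is the algebraic number theory step of extending $\psi_{2,p}$ from $R$ to the Dedekind overring $S$; once the implication $p \nmid \mathrm{disc}(f) \Rightarrow p \nmid [\mathcal{O}:\mathbb{Z}[\alpha]]$ is secured and the extended map $\psi_P$ is identified, the hypotheses of Theorem \ref{wstuff}(ii) fall out by inspection.
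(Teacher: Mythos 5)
Your proposal is correct and follows essentially the same route as the paper: part (ii) is immediate from Theorem~\ref{wstuff}(i), and for part (i) you pass to the Dedekind overring $\frac{1}{\mu}\mathcal{O}$, use $p\nmid\mathrm{disc}(f)$ to get unramifiedness (hence $p\notin\rho_S^2\supseteq\rho_S^{p-1}$), and apply Theorem~\ref{wstuff}(ii), just as the paper does with its maximal ideal $(p,f_j(\alpha))$ of $\frac{1}{\mu}\mathcal{O}$. The only cosmetic difference is that you explicitly extend $\psi_{2,p}$ to $\frac{1}{\mu}\mathcal{O}$ and claim equality $G_{\rho_S}=G_\rho$, whereas the paper only needs, and only states, the containment $\ker\psi_{2,p}|_{\GL(n,R)}\subseteq\ker\psi_\rho|_{\GL(n,\frac{1}{\mu}\mathcal{O})}$.
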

\begin{proof}
(i) Let $\mathcal{O}$ be the ring of integers of $\F$. Select an
irreducible factor $\bar{f}_j(t)$ of $\bar{f}(t)$, and let
$f_j(t)$ be a pre-image of $\bar{f}_j(t)$ in $\mathbb Z[t]$. The
ideal $\rho$ of $\frac{1}{\mu}\mathcal{O}$ generated by $p$ and
$f_j(\alpha)$ is maximal, and $p\not \in \rho^2$ (see
\cite[Proposition 3.8.1, Theorem 3.8.2]{Koch}). Since the kernel
of $\psi_{2,p}$ on $\mathrm{GL}(n,R)$ is contained in the kernel
of $\psi_\rho$ on $\mathrm{GL}(n,\frac{1}{\mu}\mathcal{O})$,
Theorem~\ref{wstuff} (ii) implies that $\psi_{2,p}$ is a
W-homomorphism.

(ii) This part is immediate from Theorem~\ref{wstuff} (i).
\end{proof}

For example, let $\F$ be the $c$th cyclotomic field; if $p$ is an
odd prime not dividing $\mathrm{lcm} (\mu, c)$, then $\psi_{2,p}$
is a W-homomorphism.

We denote the W-homomorphism $\psi_{2,p}$ for $p$ as in
Lemma~\ref{numberfieldswhom} by $\Psi_2= \Psi_{2,p}$.

\subsubsection{Function fields}
\label{functionfields}

Let $\F=\mathbb{P}(x_1, \ldots, x_m)$, so $R \subseteq
\frac{1}{\mu}\mathbb{P}[x_1, \ldots, x_m]$ for some
$\mathbb{P}$-polynomial $\mu = \mu(x_1, \ldots, x_m)$. Suppose
that $\alpha = (\alpha_1, \ldots, \alpha_m)$ is a non-root of
$\mu$,  where the $\alpha_i$ are in the algebraic closure
$\overline{\mathbb{P}}$ of $\mathbb{P}$. Note that if $\mathbb{P}$
is infinite then $\alpha$ can always be chosen in $\mathbb{P}^m$.
Define $\psi_{3,\alpha}$ to be the substitution homomorphism that
replaces $x_i$ by $\alpha_i$, $1\leq i\leq m$.

Let $\mathrm{char} \,  R = 0$. Set $\Psi_3=\Psi_{3,\alpha , p}=
\Psi_{i,p}\circ \psi_{3,\alpha}$, where $p>n$, $i = 1$ if
$\mathbb{P} = \Q$, and $i = 2$ if $\mathbb{P}\neq \Q$ is a number
field.

If $\mathrm{char} \,  R = p>n$ then set $\Psi_3=\Psi_{3,\alpha} =
\psi_{3,\alpha}$.

In all cases $\Psi_3$ is a W-homomorphism by Theorem~\ref{wstuff} (i).

\subsubsection{Algebraic function fields} \label{algfunfields}
Let $\F =\mathbb{L}(\beta)$ where $\mathbb{L} = \mathbb{P}(x_1,
\ldots, x_m)$, $|\F/\mathbb{L}| = e$ and $\beta$ has minimal
polynomial $f(t) = a_0 + \cdots + a_{e-1}t^{e-1}+t^e$. Then $R
\subseteq \frac{1}{\mu}\mathbb{L}_0[\beta]$ for some $\mu \in
\mathbb{L}_0 =$ $\mathbb{P}[x_1, \ldots, x_m]$. We may assume that
$f(t) \in \mathbb{L}_0[t]$.

Define $\psi_{4,\alpha}$ on $\mathrm{GL}(n,R)$ as follows. Let
$\alpha \in \overline{\mathbb{P}}^m$, $\mu(\alpha) \neq 0$; and
let $\tilde{\beta}$ be a root of $\tilde{f}(t) =$ $\tilde{a}_0 +
\cdots + \tilde{a}_{e-1}t^{e-1}+t^e$ where $\tilde{a}_i:=
\psi_{3,\alpha}(a_i)$. Each element of $R$ may be uniquely
expressed as $\sum^{e-1}_{i = 0} c_i\beta^i$ for some $c_i \in
\frac{1}{\mu}\mathbb{L}_0$. Then
\[
\psi_{4,\alpha}: \textstyle{\sum^{e-1}_{i = 0}} c_i\beta^i \mapsto
\textstyle{\sum^{e-1}_{i = 0}}\tilde{c}_i\tilde{\beta}^i
\]
where $\tilde{c}_i = \psi_{3,\alpha}(c_i)$.

Suppose that $\mathrm{char}\, R = 0$, so we can choose $\alpha \in
\mathbb{P}^m$. Set $\Psi_4= \Psi_{4,\alpha,p} =
\Psi_{i,p}\circ\psi_{4,\alpha}$ where $p>n$, $i = 1$ if
$\mathbb{P}=\Q$ and $\tilde{\beta} \in \Q$, and $i = 2$ otherwise.

If $\mathrm{char} \,  R = p > n$ then set $\Psi_{4} = \psi_{4,\alpha}$.

By Theorem~\ref{wstuff} (i), $\Psi_4$ is a W-homomorphism.

\begin{remark}\label{SWremark}
An {\em SW-homomorphism} on $\mathrm{GL}(n,R)$ is a congruence
homomorphism with finite image such that every torsion element of
its congruence subgroup is unipotent (see \cite[4.8, \mbox{p.}
56]{Wehrfritz} and \cite[Section 4]{survey}). This property of the
congruence subgroup is crucial to the algorithms of
\cite{Finiteness} for finiteness testing and structural analysis
of finite matrix groups over infinite fields. The W-homomorphisms
$\Psi_i$ are SW-homomorphisms; moreover, this remains true for
$\Psi_3$ and $\Psi_4$ without requiring that $p>n$.
\end{remark}

\section{Testing virtual solvability}
\label{SFsection}

\subsection{Preliminaries}

If $\psi_{\rho}$ is a W-homomorphism on $\GL(n,R)$, then $G$ is
solvable-by-finite if and only if $G_{\rho}'$ is unipotent. In
this subsection we develop procedures to test whether a finitely
generated subgroup of $\GL(n,R)$ is unipotent-by-abelian. Denote
the $\F$-enveloping algebra of $M\subseteq \mathrm{Mat}(n,\F)$ by
$\gp{M}_\F$, and the $\F$-linear span of $M$ by
$\mathrm{span}_\F(M)$.
\begin{lemma}\label{threepointone}
Let $H \leq  \mathrm{GL}(n, \F)$ be unipotent-by-abelian. Then $gh
- hg\in \mathrm{Rad}\gp{H}_{\F}$ for all $g, h \in H$.
\end{lemma}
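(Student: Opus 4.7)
My plan is to exhibit $gh - hg$ as an element of the Jacobson radical of the enveloping algebra $A := \gp{H}_{\F}$ by showing it annihilates every simple $A$-module. Since $A = \mathrm{span}_{\F}(H)$ is a finite-dimensional $\F$-algebra, $\Rad A$ coincides with the intersection of the kernels of its irreducible representations. Let $N \trianglelefteq H$ be unipotent with $H/N$ abelian, so $H' \subseteq N$, and use the identity
\[
gh - hg \;=\; hg \cdot ([g,h] - I).
\]
As $\Rad A$ is a two-sided ideal and $[g,h] \in N$, it suffices to prove that $u - I \in \Rad A$ for every $u \in N$; equivalently, that $N$ acts trivially on every simple $A$-module.

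Now let $V$ be a simple $A$-module. Since $A = \mathrm{span}_{\F}(H)$, $V$ is also irreducible as an $H$-module, and $N$ acts on it as a group of unipotent matrices. By Kolchin's theorem the fixed subspace $V^N$ is non-zero; normality of $N$ in $H$ makes $V^N$ an $H$-submodule, so simplicity forces $V^N = V$. Hence $N$, and in particular $[g,h]$, acts as the identity on $V$, so $[g,h] - I$ annihilates every simple $A$-module. Therefore $[g,h] - I \in \Rad A$, and $gh - hg = hg \cdot ([g,h] - I) \in \Rad A$ as required.

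The one delicate point is the passage from the mere existence of an $N$-fixed vector in $V$ to triviality of the whole $N$-action: this is where Kolchin's theorem combines with normality of $N$ in $H$ and simplicity of $V$. Everything else is formal manipulation of ideals in a finite-dimensional algebra.
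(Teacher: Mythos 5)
Your proof is correct, and it proceeds by a genuinely different route in the key technical step. Both you and the paper begin with the same factorization $gh-hg = hg\cdot([g,h]-1_n)$ and reduce to showing $[g,h]-1_n\in\Rad\gp{H}_\F$. The paper then invokes the fact that a unipotent-by-abelian group is triangularizable (over $\overline{\F}$), so that $[g,h]-1_n$ is strictly upper triangular and $a([g,h]-1_n)$ is nilpotent for every $a\in\gp{H}_\F$; this places $[g,h]-1_n$ in the radical by the standard criterion that $x\in\Rad A$ whenever $ax$ is nilpotent for all $a\in A$. You instead use the module-theoretic description $\Rad A = \bigcap_{V\text{ simple}}\mathrm{Ann}_A(V)$, then apply Kolchin's theorem together with the Clifford-theoretic observation that $V^N$ is an $H$-submodule to conclude that the unipotent normal subgroup $N$ acts trivially on every simple $A$-module. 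Both arguments ultimately rest on a Kolchin-type result, but the paper works concretely with a triangularizing basis and nilpotency, while you work abstractly with simple modules; your version has the mild advantage of proving the slightly stronger statement that $u-1_n\in\Rad\gp{H}_\F$ for every $u\in N$, and avoids any discussion of whether triangularization happens over $\F$ or its algebraic closure. One small point worth spelling out: the reason $N$ acts by unipotent operators on an abstract simple $A$-module $V$ is that $(u-1_n)^n=0$ already holds in $A\subseteq\mathrm{Mat}(n,\F)$, so its image under any representation of $A$ is again nilpotent.
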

\begin{proof} (\mbox{Cf.} \cite[\mbox{p.} 256]{Dixon85} and
\cite[Lemma 5]{BettinaBjorn}.) Since $H'$ is unipotent, $h_1 =
[g,h] - 1_n$ is nilpotent. For every $a \in \gp{H}_{\F}$, the
matrix $ah_1$ is nilpotent (as $H$ is triangularizable), and so
$h_1 \in\Rad \gp{H}_{\F}$. Thus $gh-hg= hgh_1 \in
\Rad\gp{H}_{\F}$.
\end{proof}

\begin{lemma}\label{3dot2}
Let $H\unlhd G$ where $H$ is unipotent-by-abelian. If $x \in
\mathrm{Rad}\gp{H}_{\F}$ then there is a non-zero $G$-module in
the nullspace of $x$.
\end{lemma}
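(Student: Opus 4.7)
My plan is to use the joint annihilator of the whole Jacobson radical $J := \Rad\gp{H}_\F$ as the desired $G$-submodule. Set
\[
V_0 = \{v \in \F^n : Jv = 0\}.
\]
Since $x \in J$, certainly $V_0 \subseteq \ker x$, so it will suffice to prove that $V_0$ is non-zero and stable under $G$.

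First I would establish $V_0 \neq 0$. Because $\gp{H}_\F$ is a finite-dimensional $\F$-algebra, its Jacobson radical $J$ is a nilpotent two-sided ideal; consequently there is a largest integer $k \geq 0$ with $J^k \F^n \neq 0$. Then $J \cdot (J^k\F^n) = J^{k+1}\F^n = 0$, so $J^k\F^n$ is a non-zero subspace contained in $V_0$.

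The main step is $G$-invariance of $V_0$, which is where the hypothesis $H \unlhd G$ enters. Conjugation by any $g \in G$ restricts to an $\F$-algebra automorphism of $\gp{H}_\F$, and algebra automorphisms preserve the Jacobson radical, so $g^{-1}Jg = J$. For $v \in V_0$ and $j \in J$ we then obtain $j(gv) = g(g^{-1}jg)v = 0$, since $g^{-1}jg \in J$ annihilates $v$. Hence $gv \in V_0$, and $V_0$ is a non-zero $G$-submodule of $\F^n$ lying in the nullspace of $x$.

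The main obstacle is conceptual rather than technical: the literal nullspace of $x$ need not be $G$-invariant, or even $H$-invariant. Shrinking to $V_0$, the common nullspace of every element of $J$, restores $G$-invariance precisely because $J$ as a whole, being a characteristic piece of $\gp{H}_\F$, is stable under conjugation by $G$.
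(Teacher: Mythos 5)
Your proof is correct and takes essentially the same approach as the paper's: both pass to the common nullspace $V_0$ of the whole radical $\Rad\gp{H}_\F$, which is $G$-invariant because normality of $H$ makes conjugation by $G$ an algebra automorphism of $\gp{H}_\F$ (hence it preserves the radical), and non-zero because the radical is nilpotent. You simply fill in the details that the paper's terse proof leaves implicit.
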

\begin{proof}
The hypotheses on $H$ ensure that $x^g \in
\mathrm{Rad}\gp{H}_{\F}$ for all $g\in G$. Thus, the nullspace of
$\mathrm{Rad}\gp{H}_{\F}$ is a (non-zero) $G$-module in the
nullspace of $x$.
\end{proof}

In \cite[\mbox{p.} 4155]{PositiveFiniteness} we describe a simple
recursive procedure ${\tt ModuleViaNullSpace}(S, x)$ that finds,
in no more than $n$ iterations, a $G$-module $U$ in the nullspace
of $x\in \mathrm{Mat}(n,\F)$ that contains every such $G$-module.
Hence, if $x$ is as in Lemma~\ref{3dot2} then $U$ is non-zero.

We now establish a convention. For a subset $K = \{h_1, \ldots,
h_k\}$ of $\mathrm{Mat}(n, \F)$, define
\[
K^G = \{h_1^g, \ldots , h_k^g \mid g\in G\}.
\]
If $K\subseteq G$ then $\gp{K^G}$ is the normal closure of
$\gp{K}$ in $G$, which is usually denoted $\gp{K}^G$.

We next state a procedure that will be needed in several places
later.

\bigskip

${\tt BasisAlgebraClosure}(K, S)$

\medskip

Input:  finite subsets $K$ and $S = \{g_1, \ldots, g_r\}$ of
$\mathrm{GL}(n, \F)$.

Output: A basis of the $\F$-enveloping algebra of $\gp{K^G}$,
where $G = \gp{S}$.

\medskip

\begin{enumerate}

\item $\mathcal{A}:= K\cup K^{-1}$.

\item While $\exists \, g \in S \cup S^{-1}$ and $A \in
\mathcal{A}$ such that $g^{-1} A g \notin
\mathrm{span}_{\F}(\mathcal{A})$, do

$\mathcal{A} :=\mathcal{A} \cup \{ g^{-1} A g \}$.

\item `Spin up' to construct a basis $\mathcal{B}$ of the
$\F$-enveloping algebra of $\gp{\mathcal A}$.

\item Return $\mathcal{B}$.

\end{enumerate}

%\vspace*{2.5mm}

${\tt BasisAlgebraClosure}$ terminates in at most $n^2$
iterations. For a discussion of the well-known `spinning up'
method in step (3), see, e.g., \cite[Section 3.1]{JSC4534}. One
feature of ${\tt BasisAlgebraClosure}$ is that the basis
$\mathcal{B}$ returned consists of elements of $\gp{K^G}$.
\begin{remark}\label{obviousmods}
If $K \subseteq \mathrm{Mat}(n, \F)$ contains non-invertible
elements, then the obvious modifications should be made to ${\tt
BasisAlgebraClosure}$. That is, $\mathcal{A}$ is initialized to
$K$ in step (1); and in step (3) a basis of $\gp{\mathcal A}_\F$
is constructed (by the same spinning up as before). The output of
this modified procedure, which we name ${\tt
BasisAlgebraClosure}^*$, is a basis of $\gp{K^G}_\F$.
\end{remark}

\subsection{Testing virtual solvability}

Let $U$ be a $H$-submodule of $V := \F^n$, where $H\leq
\mathrm{GL}(n,\F)$. Extend a basis of $U$ to one of $V$, with
respect to which $H$ has block triangular form. We denote the
projection homomorphism of $H$ onto the corresponding block
diagonal group in $\mathrm{GL}(n, \F)$ by $\pi_U$. The kernel of
$\pi_U$ is a unipotent normal subgroup of $H$.

${\tt NormalGenerators}$ is a procedure that accepts $S$ and a
W-homomorphism $\Psi=\psi_\rho$ as input, and returns {\it normal
generators} for $G_\rho$, i.e., generators for a subgroup whose
normal closure in $G$ is $G_\rho$. This procedure first finds a
presentation $\mathcal P$ of $\Psi(G)$ on the generating set
$\Psi(g_1), \ldots , \Psi(g_r)$. Such presentations can be
computed using algorithms from \cite{CT,OBriensurvey1}. The
relators in $\mathcal P$ are then evaluated by replacing each
occurrence of $\Psi(g_i)$ in each relator by $g_i$, $1\leq i \leq
r$. The resulting words in the $g_i$ constitute the output of
${\tt NormalGenerators}$.

We also need the following recursive procedure.

\bigskip

${\tt ExploreBasis}(\mathcal{A},T)$

\medskip

Input: finite subsets $\mathcal{A}$, $T$ of $\mathrm{GL}(m,\F)$,
where $\mathcal{A} \subseteq \gp{T}$.

Output: ${\tt true}$ or ${\tt false}$.

\medskip

\begin{enumerate}
\item If $[A_i, A_j]= 1_m$ $\forall \, A_i, A_j \in \mathcal{A}$
then return ${\tt true}$.

\item $U_1:={\tt ModuleViaNullSpace}(T, A_iA_j - A_jA_i)$ where $[A_i, A_j] \ne 1_m$.\\
 If $U_1 = \{0 \}$ then return ${\tt false}$.

\item $\pi := \pi_{U_1}$, $U_2:=V/U_1$.

\item For $\ell = 1, 2$ do

   $\mathcal{A}_\ell := \{ \pi(A_j)_{|U_\ell} \mid A_j \in
   \mathcal{A}\}$, $T_\ell := \{ \pi(h_j)_{|U_\ell} \mid h_j \in T\}$;

   if ${\tt ExploreBasis}(\mathcal{A}_\ell,T_\ell) = {\tt false}$ then return ${\tt false}$.

\item
   Return ${\tt true}$.
\end{enumerate}

\vspace*{2.5mm}

Now we can assemble our algorithm to decide the Tits alternative.

\bigskip

${\tt IsSolvableByFinite}(S)$

\medskip

Input: $S = \{g_1, \ldots, g_r\}\subseteq \mathrm{GL}(n,R)$.

Output: ${\tt true}$ if $G = \gp{S}$ is solvable-by-finite and
${\tt false}$ otherwise.

\medskip

\begin{enumerate}

\item $K := {\tt NormalGenerators}(S,\Psi)$, $\Psi$ a
W-homomorphism on $\GL(n,R)$.

\item $\mathcal{A} := {\tt BasisAlgebraClosure}(K,S)$.

\item Return ${\tt ExploreBasis}(\mathcal{A},S)$.

\end{enumerate}

\vspace*{1.5mm}

\begin{remark}
When $\F=\Q$, ${\tt IsSolvableByFinite}$ is similar to the
algorithm of \cite[\mbox{p.} 1280]{BettinaBjorn}---but see the
first paragraph of \cite[Section 10.1]{BettinaBjorn}.
\end{remark}

${\tt IsSolvableByFinite}$ terminates in no more than $n$
iterations at step (3). A report of ${\tt false}$ is correct by
Lemmas~\ref{threepointone} and \ref{3dot2}. Note that if ${\tt
true}$ is returned at the first pass through step (1) of ${\tt
ExploreBasis}$, then $G$ is abelian-by-finite.

Algorithms to test solvability of matrix groups over finite fields
are implemented in \cite{Polenta, Magma}. We can augment ${\tt
IsSolvableByFinite}$ by checking solvability of $\Psi(G)$ during
step (1), and thus obtain a solvability testing algorithm for
finitely generated subgroups of $\GL(n,\F)$. Moreover, when
$R=\Z$, these algorithms decide whether $G$ is polycyclic or
polycyclic-by-finite (\mbox{cf.} \cite[Theorem
4.2]{Baumslagetal}).

We now point out some further additions to our basic method for
deciding virtual solvability.

First suppose that $\mathrm{char}\, \F = 0$. Sometimes we can
quickly detect that $G$ is not solvable-by-finite, by means of the
following observations. A classical theorem of Jordan states that
there is a function $f:\mathbb{N}\rightarrow \mathbb{N}$
(independent of $\mathbb{F}$) such that if $G$ is a finite
subgroup of $\mathrm {GL}(n,\F)$, then $G$ has an abelian normal
subgroup of index bounded by $f(n)$. It follows from \cite[10.11,
\mbox{p.} 142]{Wehrfritz} that if $G$ is solvable-by-finite, then
the solvable radical of $\Psi(G)$ has index bounded by $f(n)$. To
apply this criterion, we use an algorithm described in
\cite[Section 4.7.5]{HoltEicketal05} to compute the index of the
solvable radical of a matrix group over a finite field, and then
we compare this index with $f(n)$. Collins \cite{Collins2} has
found the optimal function $f$ for all $n$. In particular,
$f(n)=(n+1)!$ for $n\geq 71$.

Next, recall that if $\Psi=\psi_\rho$ is $\Psi_{3,\alpha, p}$ or
$\Psi_{4, \alpha, p}$, then $p$ must be greater than $n$ by
definition. However, with extra restrictions in place, it is
possible to test virtual solvability in characteristic $p\leq n$
too. Suppose that $\rho$ is a proper ideal of $R$ such that either
(i) $\mathrm{char}\, R=0$, $\mathrm{char} (R/\rho) >0$ and
$G_\rho$ is generated by unipotent elements; or (ii)
$\mathrm{char}\, R>0$ and $G_\rho$ is generated by diagonalizable
elements. Then $G$ is solvable-by-finite if and only if $G_\rho'$
is unipotent: this follows from the last paragraph of
\cite[Section 1]{Wehrf}, and \cite[Theorem 1 (d)]{Wehrf}. We can
determine whether (i) or (ii) holds by checking whether each
normal generator of $G_\rho$ is unipotent or diagonalizable.

\section{Completely reducible groups}
\label{CompletelyReducible}

Some of our problems coincide in an important special case.
\begin{lemma}\label{AllTheSameIfCRed}
Suppose that $G\leq \GL(n,\F)$ is completely reducible, where $\F$
is any field. Then the following are equivalent:
\begin{itemize}
\item[{\rm (i)}] $G$ is solvable-by-finite; \item[{\rm (ii)}] $G$
is nilpotent-by-finite; \item[{\rm (iii)}] $G$ is
abelian-by-finite.
\end{itemize}
\end{lemma}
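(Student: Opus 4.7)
The implications (iii) $\Rightarrow$ (ii) $\Rightarrow$ (i) are immediate from the inclusions abelian $\subseteq$ nilpotent $\subseteq$ solvable, so the entire content of the lemma is the implication (i) $\Rightarrow$ (iii): a completely reducible solvable-by-finite linear group is abelian-by-finite. My plan is to combine the already-cited structure theorem of Suprunenko with a classical semisimplicity argument.

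First, I will invoke the theorem of Suprunenko used earlier in the paper (Theorem 7, p.\ 135 of \cite{Supr}): since $G$ is solvable-by-finite, it has a triangularizable normal subgroup $N$ of finite index. So it suffices to show that this particular $N$ is in fact abelian. The strategy is to upgrade ``triangularizable'' to ``diagonalizable'' by bringing complete reducibility to bear on $N$.

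Next, I will argue that $N$ is itself completely reducible as a subgroup of $\GL(n,\F)$. Writing $V = \F^n$ as a direct sum of irreducible $\F G$-submodules, it is enough to show that the restriction to $N$ of each irreducible $\F G$-summand is a semisimple $\F N$-module; this is exactly the classical Clifford theorem, applied to the normal subgroup $N$ of finite index in $G$. (The finiteness of $[G:N]$ ensures that only finitely many $G$-conjugates of any irreducible $\F N$-submodule arise, so no infinite-dimensionality issues appear.)

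Having established that $N$ is both completely reducible and triangularizable, the final step is to observe that such a linear group must be diagonalizable, hence abelian. Decompose $V$ into irreducible $\F N$-submodules; each such summand $W$ inherits a complete flag of $N$-invariant subspaces from any triangularizing basis of $N$, and irreducibility of $W$ forces this flag to be trivial, so $\dim W = 1$. Thus $N$ acts diagonally in a suitable basis and is abelian. This gives an abelian normal subgroup of finite index in $G$, i.e., $G$ is abelian-by-finite, completing (i) $\Rightarrow$ (iii).

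I expect the only subtle point to be the passage from $G$-complete-reducibility to $N$-complete-reducibility, since this is where Clifford's theorem is used rather than a purely formal manipulation; the remaining steps are either invocations of the cited literature or elementary linear algebra.
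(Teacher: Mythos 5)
Your proof is correct and takes essentially the same route as the paper's: reduce via Suprunenko's theorem to a triangularizable normal subgroup $N$ of finite index, then apply Clifford's theorem (explicit in your write-up, implicit in the paper's) to force $N$ to be abelian. The paper phrases the final step more compactly---$N'$ is unipotent and normal in $G$, hence completely reducible, hence trivial---which bypasses your flag argument and any concern about the field over which the triangularizing basis is actually defined.
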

\begin{proof}
Trivially (iii) $\Rightarrow$ (ii) $\Rightarrow$ (i). If $G$ is
solvable-by-finite, then a normal unipotent-by-abelian subgroup of
$G$ must be abelian, because a completely reducible unipotent
group is trivial. Thus (i) implies (iii).
\end{proof}

Motivated by Lemma~\ref{AllTheSameIfCRed}, we consider how to
decide whether a solvable-by-finite group $G$ is completely
reducible. Let $\psi_\rho$ be a W-homomorphism on $\GL(n,R)$. If
$G_\rho$ is completely reducible (hence abelian) and
$\mathrm{char} \, R$ does not divide $|G : G_\rho |$, then $G$ is
completely reducible by \cite[Theorem 1, \mbox{p.} 122]{Supr}.
Therefore, in characteristic zero, $G$ is completely reducible if
and only if the elements of ${\tt BasisAlgebraClosure}(K,S)$
commute pairwise and are all diagonalizable, where $K={\tt
NormalGenerators}(S, \psi_\rho)$. If $\mathrm{char} \, R=p>0$
divides $|G : G_\rho |$, then we cannot decide complete
reducibility of $G$; otherwise we apply the characteristic zero
criterion.

A finitely generated solvable linear group may not be finitely
presentable \cite[4.22, \mbox{p.} 66]{Wehrfritz}. However, if $G$
is both solvable-by-finite and completely reducible, then $G_\rho$
is a finitely generated abelian normal subgroup of finite index.
So we can compute presentations of $G_\rho$ and $\psi_\rho(G)$,
and combine them as explained in \cite{BettinaBjorn,CT}, to obtain
a finite presentation of $G$.

\section{Testing virtual nilpotency and related algorithms}
\label{NFAFCFsection}

We now consider the problems of deciding whether a finitely
generated linear group is nilpotent-by-finite, abelian-by-finite,
or central-by-finite. Algorithms for nilpotency testing and
computing with finitely generated nilpotent groups over arbitrary
fields are given in \cite{Large,Draft}.

Henceforth $\mathrm{char}\, \F=0$ unless stated otherwise.

\subsection{Preliminaries}

\begin{lemma}\label{fourpointfive}
Let $H\leq \mathrm{GL}(n,\F)$ be nilpotent-by-finite (resp.
abelian-by-finite), $\F$ any field. If $H$ is connected then $H$
is nilpotent (resp. abelian).
\end{lemma}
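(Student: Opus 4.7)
The plan is to pass to the Zariski closure of $H$ and exploit connectedness there. Let $\bar H$ denote the Zariski closure of $H$ in $\GL(n,\F)$. The hypothesis that $H$ is Zariski connected means precisely that $\bar H$ is a connected algebraic group.

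First I would fix a nilpotent (resp.\ abelian) normal subgroup $N$ of $H$ of finite index and write $H = \bigcup_{i=1}^k h_i N$. Taking Zariski closures, and using that closure commutes with finite unions and with left translation by a group element, we get $\bar H = \bigcup_{i=1}^k h_i \bar N$. Thus $\bar N$ is a closed subgroup of $\bar H$ of index at most $k$. Its finitely many cosets in $\bar H$ are pairwise disjoint closed sets, so each coset is also open (its complement being the union of the remaining cosets); connectedness of $\bar H$ then forces $\bar N = \bar H$.

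Next I would transfer the property from $N$ to $\bar N$. In the abelian case this is immediate: the commutator map $(x,y)\mapsto [x,y]$ is polynomial, so $\{(x,y) : [x,y] = 1_n\}$ is Zariski closed and contains $N\times N$, hence contains $\bar N \times \bar N$. In the nilpotent case I would invoke the classical fact that the Zariski closure of a nilpotent subgroup of $\GL(n,\F)$ is nilpotent (see \cite{Wehrfritz}). Either way, $\bar H = \bar N$ has the required property, and so does its subgroup $H$.

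The main obstacle is this last input in the nilpotent case: passing commutativity to the Zariski closure is essentially a definitional observation, whereas preservation of nilpotency draws on genuine structure theory for linear algebraic groups, typically proved by induction on nilpotency class together with the fact that centralizers and terms of the lower central series are Zariski closed. Everything else in the argument is purely formal manipulation of closures and cosets.
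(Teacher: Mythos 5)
Your proof is correct and follows essentially the same route as the paper: fix a finite-index nilpotent (resp.\ abelian) normal subgroup $N$, pass to Zariski closures, use that closure preserves nilpotency/abelianity, and use connectedness to force the closure of $N$ to equal that of $H$. The paper phrases the last step as $\overline{N}$ containing the connected component of $H$ rather than via the coset argument, but this is only a difference in exposition.
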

\begin{proof}
(\mbox{Cf.} \cite[Lemma 9]{Dixon85}.)  Let $N\leq H$ be nilpotent
(resp. abelian) of finite index. Then the Zariski closure of $N$
in $H$ is nilpotent (resp. abelian) and contains the connected
component of $H$; see \cite[Chapter 5]{Wehrfritz}. The lemma
follows.
\end{proof}

\begin{corollary}\label{fourpointfour}
Suppose that $R$ is a Dedekind domain of characteristic zero, and
$\rho$ is a maximal ideal of $R$ such that
$\mathrm{char}(R/\rho)=p > 2$, where $p \notin \rho^{p-1}$. Then
$G \leq \mathrm{GL}(n, R)$ is nilpotent-by-finite (resp.
abelian-by-finite) if and only if $G_{\rho}$ is nilpotent (resp.
abelian).
\end{corollary}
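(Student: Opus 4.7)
The strategy is to combine Theorem~\ref{wstuff}~(ii) with Lemma~\ref{fourpointfive} for the nontrivial direction, and a routine finite-index argument for the converse.

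First I would handle the forward direction. Suppose $G$ is nilpotent-by-finite (resp.\ abelian-by-finite); in particular $G$ is solvable-by-finite. The condition $\mathrm{char}(R/\rho)=p$ tells us $p\in\rho$, and together with the given $p>2$ and $p\notin\rho^{p-1}$ this exactly matches the hypotheses of Theorem~\ref{wstuff}~(ii). Applying that theorem to the solvable-by-finite group $G$ shows that $G_\rho$ is (Zariski) connected. Since $G_\rho$ inherits the nilpotent-by-finite (resp.\ abelian-by-finite) property from $G$, Lemma~\ref{fourpointfive} immediately promotes this to the conclusion that $G_\rho$ itself is nilpotent (resp.\ abelian).

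For the converse, I would note that $G/G_\rho$ embeds in $\mathrm{GL}(n,R/\rho)$, which is finite because $\rho$ is maximal and $R$ is (in the standing setup of this paper) a finitely generated integral domain. Hence $[G:G_\rho]<\infty$, so if $G_\rho$ is nilpotent (resp.\ abelian) then $G$ has a normal nilpotent (resp.\ abelian) subgroup of finite index and is therefore nilpotent-by-finite (resp.\ abelian-by-finite).

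No genuine obstacle appears: the hypotheses on $\rho$ are crafted exactly so that Theorem~\ref{wstuff}~(ii) applies, and Lemma~\ref{fourpointfive} is tailor-made to upgrade connectedness plus ``virtually nilpotent/abelian'' to ``nilpotent/abelian.'' The only minor point meriting a word is the finiteness of $[G:G_\rho]$ in the converse direction, which is a direct consequence of the finiteness of the residue field $R/\rho$ in the paper's setting.
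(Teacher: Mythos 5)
Your argument is correct and follows exactly the same route as the paper's proof, which simply cites Theorem~\ref{wstuff}~(ii) and Lemma~\ref{fourpointfive}. You have spelled out the details (the translation of $\mathrm{char}(R/\rho)=p$, $p\notin\rho^{p-1}$, $p>2$ into the hypothesis $p\in\rho\setminus\rho^{p-1}$ with $p$ an odd prime; the inheritance of the virtual property by $G_\rho$; and the finite-index converse), all of which are accurate.
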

\begin{proof}
This follows from Theorem~\ref{wstuff} (ii) and
Lemma~\ref{fourpointfive}.
\end{proof}

Denote by $g_d, g_u\in \mathrm{GL}(n,\F)$ the diagonalizable and
unipotent parts of $g \in \mathrm{GL}(n, \F)$, i.e., $g =
g_dg_u=g_ug_d$ is the Jordan decomposition of $g$. For $X\subseteq
\mathrm{GL}(n, \F)$ we put
\[
X_d = \{x_{d}\mid x\in X\} \qquad \text{and} \qquad X_u =
\{x_u\mid x \in X\}.
\]
\begin{proposition}\label{crucial}
Let $H= \gp{K^G}$, where $K$ is a finite subset of $G$. Then $H$
is nilpotent and $H'$ is unipotent if and only if $\gp{K_d^G}$ is
abelian, $\gp{K_u^G}$ is unipotent, and $[K_d^G,K_u^G] = \{
1_n\}$.
\end{proposition}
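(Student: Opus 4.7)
The plan is to prove the equivalence via the Jordan decomposition, which in characteristic zero behaves well under the operations appearing in the statement. The key identities are $(k^g)_d = (k_d)^g$ and $(k^g)_u = (k_u)^g$ for all $k, g \in \GL(n,\F)$, and $(ab)_d = a_d b_d$, $(ab)_u = a_u b_u$ whenever $a$ and $b$ commute. Consequently $K_d^G = (K^G)_d$ and $K_u^G = (K^G)_u$, so the hypotheses really concern the semisimple and unipotent parts of every conjugate of each $k \in K$.

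For the reverse implication, I would set $M := \langle K_d^G \rangle \cdot \langle K_u^G \rangle$. The elementwise commutation hypothesis propagates to $[\langle K_d^G \rangle, \langle K_u^G \rangle] = \{1_n\}$, so $M$ is a subgroup. Since $\langle K_d^G \rangle$ is abelian and generated by diagonalizable matrices, it consists entirely of semisimple elements (simultaneous diagonalization over $\bar{\F}$); $\langle K_u^G \rangle$ consists of unipotent elements; so $\langle K_d^G \rangle \cap \langle K_u^G \rangle = \{1_n\}$. Thus $M$ is an internal direct product, hence nilpotent with $M' \leq \langle K_u^G \rangle$ unipotent. Every generator $k^g = (k_d)^g (k_u)^g$ of $H$ lies in $K_d^G \cdot K_u^G \subseteq M$, so $H \leq M$, which yields that $H$ is nilpotent and $H'$ is unipotent.

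For the forward implication, I would invoke the structure theorem for nilpotent linear groups with unipotent commutator subgroup in characteristic zero: the sets $H_d = \{h_d : h \in H\}$ and $H_u = \{h_u : h \in H\}$ are subgroups of $\GL(n,\bar{\F})$, with $H_d$ abelian, $H_u$ unipotent, and $[H_d, H_u] = \{1_n\}$. Granting this, the inclusions $K_d^G \subseteq H_d$ and $K_u^G \subseteq H_u$ immediately give $\langle K_d^G \rangle$ abelian, $\langle K_u^G \rangle$ unipotent, and $[K_d^G, K_u^G] \subseteq [H_d, H_u] = \{1_n\}$.

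The main obstacle is justifying the structure theorem used in the forward direction. I would prove it by passing to the Zariski closure $\bar{H}$, which remains nilpotent with $\bar{H}'$ unipotent (standard, e.g.\ in Wehrfritz's book); its connected component $\bar{H}^{\circ}$ splits as a direct product of a torus and its unipotent radical by standard linear-algebraic-group theory. The decomposition extends to the full $\bar{H}$ because nilpotency together with $\bar{H}'$ unipotent forces every semisimple element of $\bar{H}$ to centralize the unipotent radical, and forces any two semisimple elements of $\bar{H}$ to commute (otherwise their product would fail to be semisimple, contradicting the commutator-unipotent hypothesis applied to the component group).
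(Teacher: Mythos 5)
Your proof is correct and follows essentially the same route as the paper: both directions hinge on the behaviour of the Jordan decomposition in a nilpotent linear group with unipotent commutator subgroup in characteristic zero, and the reverse implication is nearly word-for-word the same. The one difference is in the forward direction: the paper simply cites Segal (\emph{Polycyclic Groups}, Prop.~3, p.~136) for the fact that $h \mapsto h_d$ and $h \mapsto h_u$ are homomorphisms with commuting images, then uses surjectivity of those maps together with $(h^g)_d = (h_d)^g$ to conclude $H_d = \gp{K_d^G}$ and $H_u = \gp{K_u^G}$, whereas you invoke the same structure theorem but only need the containments $K_d^G \subseteq H_d$ and $K_u^G \subseteq H_u$, which is a slight economy; your sketch of the structure theorem via the Zariski closure and the $T \times U$ decomposition of the connected component is plausible in outline, though the extension from $\bar{H}^\circ$ to all of $\bar{H}$ (in particular, that non-commuting semisimple elements would contradict $\bar{H}'$ unipotent) would need to be fleshed out to full rigour -- the paper sidesteps this by citing the result.
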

\begin{proof}
If $\gp{K_d^G}$ is abelian, $\gp{K_u^G}$ is unipotent, and these
groups centralize each other, then the group $L$ that they
generate is unipotent-by-abelian and nilpotent. Hence the same is
true for $H \leq L$.

Now suppose that $H$ is unipotent-by-abelian and nilpotent. Then
$f_d: H \rightarrow H_d$, $f_u: H \rightarrow H_u$ defined by
\[
f_d:h\mapsto h_d, \qquad f_u:h\mapsto h_u
\]
are homomorphisms by \cite[Proposition 3, \mbox{p.} 136]{Segal}.
Thus
\[
H_d = \gp{f_d(K^G)} \qquad \text{and} \qquad H_u = \gp{f_u(K^G)}.
\]
Now $h^g= h_{d}^g h_{u}^g$ and $h_{d}^g$, $h_{u}^g$ are
diagonalizable, unipotent respectively. Uniqueness of the Jordan
decomposition implies that $h_{d}^g= (h^g)_d$ and $h_{u}^g=
(h^g)_u$,  so
\[
H_d=\gp{K_d^G} \qquad \text{and} \qquad  H_u=\gp{K_u^G}.
\]
Thus $\gp{K_u^G}$ is unipotent. Since $H$ is nilpotent, $[K_d^G ,
K_u^G] = \{1_n\}$ (see \cite[Proposition 3, \mbox{p.} 136]{Segal}
again). Finally, since $\gp{K_d^G}=H_d$ is unipotent-by-abelian
and completely reducible, it must be abelian.
\end{proof}

\subsection{Nilpotent-by-finite and abelian-by-finite groups}
\label{nfandsfsubsection}

Our algorithms for deciding whether $G$ is nilpotent-by-finite or
abelian-by-finite require that $G$ be defined over a Dedekind
domain $R$. Hence they apply, for example, when $\F$ is $\Q$, a
number field, or (a finite extension of) a univariate function
field.

\begin{lemma}\label{nilpotentvsunipotent}
Let $K\subseteq \mathrm{GL}(n,\F)$, and $\widetilde{K} := \{h -
1_n \mid h\in K\cup K^{-1}\}$. Then $H=\gp{K}$ is unipotent if and
only if $\gp{\widetilde{K}}_\F$ is nilpotent.
\end{lemma}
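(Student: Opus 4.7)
The plan is to prove both implications separately, with Kolchin's theorem handling one direction and a direct semigroup calculation handling the other.

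For the forward implication, I would assume $H = \gp{K}$ is unipotent and invoke Kolchin's theorem to conjugate $H$ simultaneously into the group of upper unitriangular matrices. After this conjugation, every $h \in K \cup K^{-1} \subseteq H$ is upper unitriangular, so every element of $\widetilde{K}$ is strictly upper triangular. Consequently, the $\F$-enveloping algebra $\gp{\widetilde{K}}_\F$ sits inside the associative algebra of strictly upper triangular $n \times n$ matrices, whose $n$th power is zero. Hence $\gp{\widetilde{K}}_\F$ is nilpotent.

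For the converse, set $N := \gp{\widetilde{K}}_\F$ and suppose $N^m = 0$ for some $m$. For every $h \in K \cup K^{-1}$ we have $h - 1_n \in \widetilde{K} \subseteq N$, so $h \in 1_n + N$. I would then argue by induction on word length that the multiplicative closure $H = \gp{K} = \gp{K \cup K^{-1}}$ is entirely contained in $1_n + N$, using the identity
\[
(1_n + x)(1_n + y) = 1_n + (x + y + xy)
\]
and the fact that $N$, being an algebra, is closed under both addition and multiplication. Since every $y \in N$ satisfies $y^m = 0$, every element of $H$ is of the form $1_n + y$ with $y$ nilpotent, i.e., is unipotent.

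The proof has no real obstacle; the only point requiring care is that the set $\widetilde{K}$ is built from $K \cup K^{-1}$ rather than from $K$ alone, which is exactly what is needed so that the inclusion $K^{-1} \subseteq 1_n + N$ is available in the inductive step of the converse. Kolchin's theorem is the substantive ingredient in the forward direction, while the reverse direction is a routine manipulation with the associative algebra structure.
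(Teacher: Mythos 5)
Your proof is correct and follows essentially the same route as the paper: Kolchin's theorem for the forward direction, and an induction on word length using the identity $(1_n+x)(1_n+y)=1_n+(x+y+xy)$ for the converse. The paper packages the converse slightly differently, by first observing that $\gp{\widetilde{K}}_\F=\mathrm{span}_\F(\{h-1_n\mid h\in H\})$ and then reading off nilpotency of each $h-1_n$, but that observation is proved by exactly the word-length induction you carry out, so the two arguments are the same in substance.
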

\begin{proof}
Observe that $\gp{\widetilde{K}}_\F=\mathrm{span}_\F ( \{h-1_n
\mid h\in H\})$. Therefore, if $H$ is unipotent then $H^x$ is
unitriangular for some $x\in \mathrm{GL}(n,\F)$, so
$\gp{\widetilde{K}}_\F$ is nilpotent. Conversely, if
$\gp{\widetilde{K}}_\F$ is nilpotent then $h - 1_n$ is nilpotent
for all $h\in H$, i.e., $H$ is unipotent.
\end{proof}

Let $K$ be a finite subset of $\mathrm{GL}(n, \F)$. The procedure
${\tt IsAbelianClosure}$ determines whether $\gp{K^G}$ is abelian
by testing whether the elements of ${\tt
BasisAlgebraClosure}(K,S)$ commute pairwise. Another auxiliary
procedure is the following (recall Remark~\ref{obviousmods}).

\bigskip

${\tt IsUnipotentClosure}(K, S)$

\medskip

Input: finite subsets $K=\{h_1 \ldots, h_k \}$ and $S$ of
$\mathrm{GL}(n, \F)$, where the $h_i$ are unipotent.

Output: ${\tt true}$ if $\gp{K^G}$ is unipotent, ${\tt false}$
otherwise, where $G = \gp{S}$.

\medskip

\begin{enumerate}

\item $\widetilde{K} := \{h_{j} - 1_n \mid 1 \leq j \leq k \}$.

\item $\mathcal{B} := {\tt BasisAlgebraClosure}^*(\widetilde{K},
S)$.

\item If $|\mathcal{B}|>n(n-1)/2$, or $B$ is not nilpotent for
some $B\in \mathcal{B}$ (i.e., $B^n\neq 0_n$), then return ${\tt
false}$.

\item If $\gp{B + 1_n : B \in \mathcal{B}}$ is unipotent then
return ${\tt true}$; else return ${\tt false}$.
\end{enumerate}

\vspace*{1.5mm}

\begin{remark}
Lemma~\ref{nilpotentvsunipotent} guarantees correctness of ${\tt
IsUnipotentClosure}$. See \cite[Section 2.1]{Large} for a
procedure to test whether a finitely generated linear group is
unipotent.
\end{remark}

Let $\Psi$ be a W-homomorphism as in
Corollary~\ref{fourpointfour}. By Proposition~\ref{crucial},
we have the following algorithm to test virtual nilpotency.

\bigskip

${\tt IsNilpotentByFinite}(S)$

\medskip

Input: a finite subset $S$ of $\mathrm{GL}(n,R)$, $R$ a Dedekind
domain of characteristic zero.

Output: ${\tt true}$ if $G = \gp{S}$ is nilpotent-by-finite, and
${\tt false}$ otherwise.

\medskip

\begin{enumerate}

\item $K := \{h_1, \ldots, h_k \}= {\tt
NormalGenerators}(S,\Psi)$.

\item $K_d := \{(h_i)_{d} \mid 1 \leq i \leq k \}$, $K_u := \{
(h_i)_{u} \mid 1 \leq i \leq k\}$.

\item \label{thirdcheck} If not ${\tt IsUnipotentClosure}(K_u,S)$
or not ${\tt IsAbelianClosure}(K_d,S)$

\noindent or $[K_d^G, K_u^G] \neq \{1_n\}$ then return ${\tt
false}$; else return ${\tt true}$.

\end{enumerate}

\vspace*{1.5mm}

\begin{remark}
In step (\ref{thirdcheck}) we use the fact that $[K_d^G,K_u^G] =
\{1_n\}$ if and only if the elements of ${\tt
BasisAlgebraClosure}(K_d,S)$ commute with the elements of ${\tt
BasisAlgebraClosure}(K_u,S)$ (these two bases are already computed
in this step).
\end{remark}

Similarly, for Dedekind domains $R$ of characteristic zero, the
algorithm ${\tt IsAbelianByFinite}(S)$ decides whether $G$ is
abelian-by-finite: it returns ${\tt IsAbelianClosure}(K, S)$,
where as usual $K$ is ${\tt NormalGenerators}(S, \Psi)$.

If either of ${\tt IsNilpotentByFinite}(S)$ or ${\tt IsAbelianByFinite}(S)$
returns ${\tt true}$, then we can decide complete reducibility of $G$:
now $G$ is completely reducible if and only if $K_u = \{ 1_n\}$.

\subsection{Central-by-finite groups}

In this subsection, instead of a W-homomorphism we may use more
generally an SW-homomorphism (see Remark~\ref{SWremark}).
\begin{lemma}\label{fivepointone}
Let $H$ be a group such that $H'$ is finite. If $A$ is a
torsion-free normal subgroup of $H$, then $A$ is central.
\end{lemma}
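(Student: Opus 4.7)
The plan is to show directly that $[h,a]=1$ for every $h\in H$ and $a\in A$, which gives $A\subseteq Z(H)$. The idea is to locate each commutator $[h,a]$ inside both $A$ and $H'$, and then exploit the clash between ``finite'' and ``torsion-free''.

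First I would fix $h\in H$ and $a\in A$ and rewrite the commutator as
\[
[h,a] = h^{-1}a^{-1}ha = a^{-1}(h^{-1}ah) = a^{-1}a^{h}.
\]
Since $A$ is normal in $H$, we have $a^{h}\in A$, whence $[h,a]\in A$. By definition $[h,a]\in H'$ as well, so $[h,a]\in A\cap H'$.

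Next I would observe that $A\cap H'$ is a subgroup of the finite group $H'$, hence is finite; but it is also a subgroup of the torsion-free group $A$, hence contains no non-trivial element of finite order. Therefore $A\cap H'=\{1\}$, and in particular $[h,a]=1$. As $h$ and $a$ were arbitrary, $A\subseteq Z(H)$, as required.

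There is no real obstacle here: the whole argument rests on the elementary observation that a group which is simultaneously finite and torsion-free must be trivial, combined with the standard identity $[h,a]=a^{-1}a^{h}$ that places the commutator inside $A$ when $A\trianglelefteq H$.
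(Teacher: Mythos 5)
Your argument is correct and is essentially the paper's proof, just written out in more detail: the paper notes $[A,H]\leq A\cap H'=\{1\}$, which is exactly your observation that each $[h,a]$ lies in both $A$ (by normality) and $H'$, together with the fact that a subgroup of a finite group inside a torsion-free group is trivial.
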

\begin{proof}
Since $[A,H] \leq A \cap H' = \{ 1\}$, this is clear.
\end{proof}

\begin{corollary}\label{torsion}
Let $\F$ be any field of characteristic zero, and let
$\Psi=\psi_\rho$ be an SW-homomorphism on $\GL(n,R)$. Then $G\leq
\mathrm{GL}(n,\F)$ is central-by-finite if and only if $G_\rho$ is
central.
\end{corollary}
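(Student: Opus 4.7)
The plan is to split the biconditional and dispose of the easy direction first. If $G_\rho$ is central, then $G_\rho \leq Z(G)$; since $\psi_\rho(G)$ is finite by the definition of an SW-homomorphism (Remark \ref{SWremark}), the subgroup $G_\rho$ has finite index in $G$, so $Z(G)$ also has finite index, and $G$ is central-by-finite.

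For the forward direction, suppose $G$ is central-by-finite. My strategy is to apply Lemma \ref{fivepointone} with $H = G$ and $A = G_\rho$, so I must verify that $G'$ is finite and that $G_\rho$ is a torsion-free normal subgroup. Normality is immediate, since $G_\rho$ is the kernel of the restriction of $\psi_\rho$ to $G$. Finiteness of $G'$ is Schur's classical theorem: a group with a central subgroup of finite index has finite commutator subgroup.

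The remaining ingredient is torsion-freeness of $G_\rho$. Let $g \in G_\rho$ be a torsion element. By the defining property of SW-homomorphisms, $g$ is unipotent; but in characteristic zero a unipotent matrix of finite order is the identity, since its minimal polynomial divides both $(t-1)^n$ and some $t^m - 1$, forcing $g = 1_n$. Hence $G_\rho$ is torsion-free, and Lemma \ref{fivepointone} yields $G_\rho \leq Z(G)$, completing the proof.

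There is no serious obstacle here; the argument is essentially a packaging of Schur's theorem with the SW-homomorphism hypothesis. The only point requiring care is that characteristic zero is genuinely needed in the torsion-freeness step, since in characteristic $p$ a nontrivial unipotent matrix can have order a power of $p$, and the corollary would fail without modification.
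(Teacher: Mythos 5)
Your proof is correct and follows essentially the same route as the paper's: Schur's theorem gives finiteness of $G'$, and Lemma~\ref{fivepointone} applies once $G_\rho$ is known to be a torsion-free normal subgroup. The only difference is that you explicitly unpack why $G_\rho$ is torsion-free (torsion elements of $G_\rho$ are unipotent by the SW-homomorphism property, and a unipotent element of finite order in characteristic zero is trivial), a point the paper states without elaboration.
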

\begin{proof}
If $G$ is central-by-finite then $G'$ is finite by a result of
Schur \cite[10.1.4, \mbox{p.} 287]{Robinson}. Since $G_\rho$ is
torsion-free, it is central by Lemma~\ref{fivepointone}. The other
direction is trivial because $|G:G_\rho|$ is finite.
\end{proof}

Corollary~\ref{torsion} underpins a simple procedure ${\tt
IsCentralByFinite}(S)$ which returns ${\tt true}$ if  $[K,S]=\{
1_n\}$, where $G_\rho =\gp{K^G}$; else it returns ${\tt false}$.
Here $\F$ is any field of characteristic zero. The same procedure
works for the fields $\F$ of positive characteristic in
Sections~\ref{functionfields}--\ref{algfunfields}, provided that
$\Psi$ is a W-homomorphism as defined there and $G_\rho$ is
completely reducible (hence torsion-free).

We could also decide whether $G$ is central-by-finite by checking
whether the `adjoint' representation that arises from the
conjugation action of $G$ on $\gp{G}_\F$ has finite image (using,
e.g., the algorithms of \cite{Finiteness}), as suggested in
\cite{Beals2}. While this approach is valid for all fields $\F$,
it may involve computing with matrices of dimension $n^2$.

\section{Implementation and performance}
\label{ExperimentalResultsSection}

We have implemented our algorithms as part of the {\sc Magma}
package {\sc Infinite} \cite{Infinite}. We use the {\sc
CompositionTree} package \cite{CT,OBriensurvey1} to study
congruence images and construct their presentations.

In practice, the single most expensive task is evaluating relators
to obtain normal generators for the kernel of a W-homomorphism.

We describe below sample outputs covering the main domains and
types of groups. The experiments were performed using {\sc Magma}
V2.17-2 on a 2GHz machine. The examples are randomly conjugated so
that generators are not sparse, and matrix entries are typically
large. All (algebraic) function fields $\F$ in these examples are
univariate, and if they have zero characteristic are over $\Q$.
Since random selection plays a role in some of the algorithms,
times have been averaged over three runs. The complete examples
are available in the {\sc Infinite} package.

\begin{enumerate}
\item $G_1 \leq \GL (7, \F)$ where $\F$ is a function field of
characteristic zero. It is conjugate to an infinite monomial
subgroup of $\GL(7,\Q)$. We decide that this $4$-generator group
is abelian-by-finite in $82$s.

\item $G_2 \leq \GL(40, \F)$ where $\F$ is an algebraic function
field of characteristic zero. It is conjugate to an infinite
completely reducible nilpotent subgroup of $\GL(40,\Q)$. We decide
that this $4$-generator group is central-by-finite in $30$s.

\item $G_3 \leq \GL(56, \F)$ where $\F$ is an algebraic function
field of characteristic zero. It is conjugate to the Kronecker
product of an infinite reducible nilpotent subgroup of $\GL(8,
\Q)$ with a primitive complex reflection group from the
Shephard-Todd list. We decide that this $7$-generator group is
nilpotent-by-finite in $219$s.

\item $G_4 \leq \GL(18, \F)$ where $\F$ is a function field over
$\mathrm{GF}(19)$. It is conjugate to the Kronecker product of a
solvable subgroup of $\GL(6, 19)$ with an infinite triangular
subgroup of $\GL(3, \F)$. We decide that this $13$-generator group
is solvable in $80$s.

\item $G_5 \leq \GL(32, \F)$  where $\F$ is the fifth cyclotomic
field. It is conjugate to the Kronecker product of an infinite
solvable subgroup of $\GL(8, \Q)$ from \cite{Polenta} with a
primitive complex reflection group from the Shephard-Todd list. We
decide that this $8$-generator group is solvable-by-finite in
$90$s.

\item $G_6 \leq \GL(12, \F)$ where $\F$ is a function field of
characteristic zero. It is conjugate to $\mathrm{SL}(12, \Z)$. We
decide that this $3$-generator group is not solvable-by-finite in
$10$s.

\item $G_7 \leq \GL(32, \F)$ where $\F$ is a number field of
degree $4$ over $\Q$. It is conjugate to the Kronecker product of
$\big\langle \tiny \left(
\begin{matrix} 1 & 1 \\ 0 & 1 \end{matrix} \right), \left(
\begin{matrix} 1 & 0 \\ 2 & 1 \end{matrix} \right) \big\rangle$ with
an infinite reducible nilpotent rational matrix group. We decide
that this $4$-generator group is not solvable-by-finite in $56$s.
\end{enumerate}

\subsection*{Acknowledgment} We are very much indebted to
Professor B.~A.~F. Wehrfritz, who kindly provided us with his new
results \cite{Wehrf} on congruence subgroups of solvable-by-finite
linear groups.

\bibliographystyle{amsplain}

\end{document}